\theoremstyle{plain}
\newtheorem{thm}{Theorem}[section]
\newtheorem{cor}[thm]{Corollary}
\newtheorem{prop}[thm]{Proposition}
\newtheorem{lemma}[thm]{Lemma}
\theoremstyle{definition}
\newtheorem{question}[thm]{Question}
\newcommand*{\house}[1]{%
  \mathord{%
    \mathpalette\@house{#1}%
  }%
}
\newcommand*{\@house}[2]{%
  \dimen@=\fontdimen8 %
      \ifx#1\scriptscriptstyle\scriptscriptfont
      \else\ifx#1\scriptstyle\scriptfont
      \else\textfont\fi\fi
      3 %
  \sbox0{%
    $#1%
      \vrule width\dimen@\relax
      \overline{%
        \kern2\dimen@
        \begingroup 
          #2%
        \endgroup
        \kern2\dimen@
      }%
      \vrule width\dimen@\relax
      \mathsurround=1.5\dimen@ 
    $%
  }%
  \ht0=\dimexpr\ht0-\dimen@\relax
  \dp0=\dimexpr\dp0+2\dimen@\relax
  \vbox{%
    \kern\dimen@ 
    \copy0 
  }%
}
\newcommand{\Bx}{\ensuremath{\mathcal{B}}}
\newcommand{\Ox}{\ensuremath{\mathcal{O}}}
\newcommand{\Q}{\ensuremath{\mathbb{Q}}}
\newcommand{\R}{\ensuremath{\mathbb{R}}}
\newcommand{\Z}{\ensuremath{\mathbb{Z}}}
\newcommand{\C}{\ensuremath{\mathbb{C}}}
\renewcommand{\Re}{\text{Re}}
\renewcommand{\Im}{\text{Im}}
\newcounter{nootje}
\begin{document}
\title[Mahler Measure in Cubic Fields]{Minimal Mahler Measure in Cubic Number Fields}

\author[L. Eldgedge and K. Petersen ]{Lydia Eldredge and Kathleen L. Petersen }

\thanks{The second author was supported by Simons Foundation Grant 430077}

\keywords{Mahler Measure, Weil Height, Cubic Number Fields}

\subjclass{11G50, 11R04, 11C08}

\begin{abstract}
The minimal integral Mahler measure of a number field $K$, $M(\Ox_K)$, is the minimal Mahler measure of an integral generator of $K$.  Upper and lower bounds, which depend on the discriminant and degree of $K$, are known.  We show that for three natural families of cubics, the  lower bounds are sharp with respect to its growth as a function of discriminant. We construct an algorithm to compute $M(\Ox_K)$ for all cubics with absolute value of the discriminant bounded by $N$ and show the resulting data for $N=10,000$.

\end{abstract}
 
\maketitle


\section{Introduction}

The Mahler measure of  a non-constant  polynomial $f(x)=c \prod_{i=1}^d (x-\alpha_i)\in \C[x]$ is  
\[
M(f) = |c| \prod_{ |\alpha_i|\geq 1} |\alpha_i|
\]
and for an algebraic number we define $M(\alpha)$ to be the Mahler measure of a minimal polynomial for $\alpha$ over $\Q$ (with content 1).  
We define the minimal Mahler measure of a number field $K$ to be the minimal Mahler measure of a generator,
\[
M(K) = \min\{ M(\alpha): \Q(\alpha) = K \}
\]
and we write $M(\Ox_K)$ for the minimal Mahler measure of an integral generator. 
The minimal Mahler measure has been linked to bounds for $\ell$-torsion in class groups \cite{MR3778549} and systoles of arithmetic  surfaces \cite{MR3747173}. 
Let $D_K$ denote the absolute discriminant of a number field $K$. In this work, we study the dependence of  $M(\Ox_K)$ on  $|D_K|$ for cubic number fields $K$. 

Silverman \cite{MR747871}  showed that for all $\alpha$ of degree $d\geq 2$, 
 \[
d^{-\frac{d}{2(d-1)}}|D_{\Q(\alpha)}|^{\frac1{2(d-1)}}\leq  M(\alpha).
 \]
Our first result, proven in Section~\ref{section:lowerbounds},  shows that this bound  cannot be improved (by a larger exponent on the discriminant) for cubic fields of various types.
\begin{thm}\label{thm:lowerbound}
There are infinitely many Galois cubics, totally real non-Galois cubics, and non-totally real cubics $K$ such that 
\[  M(\Ox_K) \leq 2^{\frac12} |D_K|^{\frac14}. \]
\end{thm}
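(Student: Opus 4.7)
The plan is to exhibit, for each of the three field types, an explicit one-parameter family of monic integer cubic polynomials $\phi_n(x)$ such that $M(\alpha_n)\leq\sqrt{2}\,|D_{K_n}|^{1/4}$ for infinitely many $n$, where $\alpha_n$ is a root and $K_n=\Q(\alpha_n)$. Natural candidates are Shanks' simplest cubics $f_n(x)=x^3-nx^2-(n+3)x-1$ for the Galois case, the variant $g_n(x)=x^3-nx^2-(n+1)x-1$ for the totally real non-Galois case, and $h_n(x)=x^3-nx^2+x-(n+1)$ for the non-real case. In each family, irreducibility over $\Q$ follows from the rational root theorem: the possible rational roots are divisors of the constant term, and one checks directly that none is a root. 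The field type is then read off from the discriminant: $\Delta(f_n)=(n^2+3n+9)^2$ is always a perfect square, so $K_n$ is cyclic (hence totally real); $\Delta(g_n)=n^4+2n^3-5n^2-6n-23=(n^2+n-3)^2-32$, which for $n\geq 3$ is positive and, by solving $(n^2+n-3)^2-k^2=32$ as a factorization of $32$, a square only when $n=3$, so $K_n$ is totally real non-Galois for $n\geq 4$; and $\Delta(h_n)=-(4n^4+4n^3+8n^2+36n+31)<0$, so $K_n$ has exactly one real place.

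Next I would estimate the Mahler measure. In each family, evaluating $\phi_n$ at the consecutive integers $n+1, n+2$ (or $n, n+1$ for $h_n$) brackets the unique large real root $\rho_n$ in an interval of length one. The remaining two roots are then pinned down by Vieta's formulas: for Shanks' cubic they are $-1/(1+\rho_n)$ and $-(1+\rho_n)/\rho_n$, so $M(f_n)=1+\rho_n<n+3$; for $g_n$ they are the two real roots of a quadratic whose sum lies in $(-2,-1)$ and whose product is $1/\rho_n$, both of which can be shown to lie in $[-1,1]$ using $\rho_n^2>n\rho_n+1$ (which follows from $\rho_n>n+1$), so $M(g_n)=\rho_n<n+2$; for $h_n$ the two roots form a complex conjugate pair $z,\bar z$ with $|z|^2=(n+1)/\rho_n>1$, so $M(h_n)=\rho_n\cdot|z|^2=n+1$ exactly. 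Combining with the discriminant formulas, an elementary polynomial inequality (e.g.\ $(n+3)^2\leq 2(n^2+3n+9)$ in the Shanks case) then yields $M(\alpha_n)^4\leq 4|\Delta(\phi_n)|$, i.e., $M(\alpha_n)\leq\sqrt{2}\,|\Delta(\phi_n)|^{1/4}$, for all sufficiently large $n$, with plenty of asymptotic slack.

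The principal obstacle is passing from $|\Delta(\phi_n)|$ to $|D_{K_n}|$, since $|\Delta(\phi_n)|=|D_{K_n}|\cdot[\Ox_{K_n}:\Z[\alpha_n]]^2$, and a large index would tighten the inequality we need. To conclude, I would show that the index equals $1$ for infinitely many $n$ in each family, yielding $|D_{K_n}|=|\Delta(\phi_n)|$ and hence the desired conclusion $M(\Ox_{K_n})\leq M(\alpha_n)\leq\sqrt{2}\,|D_{K_n}|^{1/4}$. For the Shanks family this is classical: $\Z[\alpha_n]=\Ox_{K_n}$ whenever $n^2+3n+9$ is squarefree, and squarefree values of the irreducible quadratic $n^2+3n+9$ occur with positive density. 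For $g_n$ and $h_n$ the discriminants are degree-four integer polynomials in $n$, which take squarefree values infinitely often by standard sieving arguments for irreducible integer polynomials; when $|\Delta(\phi_n)|$ is squarefree the index is necessarily $1$, and more generally Dedekind's criterion can be invoked at each prime $p$ with $p^2\mid\Delta(\phi_n)$ to show that for infinitely many $n$ no such $p$ divides the index. Since $|D_{K_n}|\to\infty$ in each family, these three constructions produce infinitely many distinct fields of the three types satisfying the required bound.
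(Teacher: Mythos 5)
Your overall strategy (explicit one-parameter families, bracketing the large root by the intermediate value theorem, comparing $M^4$ with $4|\Delta|$, and then worrying about the index $[\Ox_{K_n}:\Z[\alpha_n]]$) is the same as the paper's, and your Galois case is sound: for Shanks' cubics the discriminant is the square of the irreducible \emph{quadratic} $n^2+3n+9$, squarefree values of irreducible quadratics do occur with positive density (Estermann/Nagell), and the index-one statement in that case is classical. The gap is in your other two families. The discriminants you compute, $\Delta(g_n)=n^4+2n^3-5n^2-6n-23$ and $\Delta(h_n)=-(4n^4+4n^3+8n^2+36n+31)$, are (up to sign) irreducible \emph{quartic} polynomials in $n$, and the assertion that such polynomials take squarefree values infinitely often is not a ``standard sieving argument'': it is an open problem for irreducible polynomials of degree $\geq 4$ (known only conditionally, e.g.\ under the abc conjecture by Granville; unconditional results stop at degree $3$). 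Your fallback --- invoking Dedekind's criterion at each prime $p$ with $p^2\mid\Delta(\phi_n)$ --- is not an argument either, since you have no control over which large primes satisfy $p^2\mid\Delta(\phi_n)$ as $n$ varies; even the weaker statement you could get away with (index bounded by an absolute constant for infinitely many $n$, which your asymptotic slack would tolerate) is not established and is of the same difficulty. So the totally real non-Galois and non-real cases are not proved as written.

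The paper sidesteps exactly this issue by engineering families whose polynomial discriminant is \emph{reducible}: it uses $g_n(x)=x^3-nx^2+n$ and $h_n(x)=x^3+nx^2+n$, with discriminants $n^2(4n^2-27)$ and $-n^2(4n^2+27)$. Taking $n$ prime makes these polynomials Eisenstein at $n$, so $n$ cannot divide the index and $n^2$ genuinely divides $D_{K_n}$; the remaining factor $4n^2\mp 27$ is a quadratic in $n$, and Estermann-type results give infinitely many primes $n$ for which it is squarefree, forcing $D_{K_n}=\Delta$. If you want to rescue your write-up, the cleanest fix is to replace your quartic-discriminant families by families of this shape (discriminant $=n^2\times(\text{quadratic in }n)$ with an Eisenstein prime), rather than trying to prove squarefreeness of a quartic. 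Your root-localization and Mahler-measure estimates themselves are correct (indeed, for your $h_n$ the identity $M=n+1$ is a nice touch), so only the arithmetic of the discriminant needs to change.
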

In Section~\ref{section:1/3example} we prove that there are cubic fields  whose minimal integral Mahler measure is related to the discriminant to a power different from $\tfrac14$. 
\begin{thm}\label{thm:exponent1/3}
There are infinitely many  non-Galois (Kummerian) cubic number fields $K$ such that 
\[ \tfrac1{30} |D_K|^{\frac13}<  M(\Ox_K)<  \tfrac{4}{3} |D_K|^{\frac13}.  \]
\end{thm}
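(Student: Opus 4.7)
The plan is to work with the family $K_k = \Q(\sqrt[3]{n})$ where $n = k^3-1$ for integers $k \geq 2$ satisfying $k \equiv 2 \pmod{3}$ and $n$ squarefree. The congruence $k \equiv 2 \pmod 3$ forces $n \equiv 7 \pmod 9$, so $n^2 \not\equiv 1 \pmod 9$; combined with $n$ squarefree this gives $\Ox_{K_k} = \Z[\alpha]$ for $\alpha = \sqrt[3]{n}$ and $|D_{K_k}| = 27n^2$, so $|D_{K_k}|^{1/3} = 3\alpha^2$. Infinitely many such $k$ exist by a standard sieve/density argument (Dirichlet's theorem produces primes $p \equiv 1 \pmod 3$ giving $k = p + 1 \equiv 2 \pmod 3$, and the density of squarefree values of $k^3 - 1$ is positive).

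For the upper bound, the element $\theta = k - \alpha \in \Ox_K$ is a unit: its minimal polynomial is $(x-k)^3 + n = x^3 - 3kx^2 + 3k^2 x - 1$, of norm $1$. The real conjugate $\theta_1 = k - \alpha$ lies in $(0,1)$ (since $k - 1 < \alpha < k$ for $k \geq 2$), and the norm-one identity gives $|\theta_2|^2 = 1/\theta_1 = k^2 + k\alpha + \alpha^2$, whence $M(\theta) = k^2 + k\alpha + \alpha^2$. Setting $r = k/\alpha$ (so $r^3 = 1 + 1/n$), the ratio $M(\theta)/|D_K|^{1/3} = (r^2 + r + 1)/3$ is less than $4/3$ exactly when $r < (\sqrt{13}-1)/2 \approx 1.303$; since $r$ is maximal at $k = 2$ with $r = (8/7)^{1/3} \approx 1.046$, the upper bound $M(\Ox_K) \leq M(\theta) < \tfrac43 |D_K|^{1/3}$ holds throughout the family.

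For the lower bound, let $\theta = c_0 + c_1\alpha + c_2\alpha^2 \in \Ox_K$ be any primitive integer generator, so $(c_1,c_2) \neq (0,0)$. Direct expansion using the Galois action $\alpha \mapsto \omega\alpha$ and the relations $1 - \omega = \sqrt 3 e^{-i\pi/6}$, $1 - \omega^2 = \sqrt 3 e^{i\pi/6}$ yields
\[
|\theta_2|^2 = \Big(c_0 - \tfrac{c_1\alpha + c_2\alpha^2}{2}\Big)^2 + \tfrac{3}{4}(c_1\alpha - c_2\alpha^2)^2, \qquad |\theta_1 - \theta_2|^2 = 3\alpha^2\bigl(c_1^2 + c_1 c_2\alpha + c_2^2\alpha^2\bigr).
\]
If $c_2 = 0$, the first formula gives $|\theta_2|^2 \geq \tfrac{3}{4}c_1^2\alpha^2 \geq \tfrac{3}{4}\alpha^2$, so $M(\theta) \geq \tfrac{3}{4}\alpha^2$. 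If $c_2 \neq 0$, completing the square in $c_1$ in the second formula gives $c_1^2 + c_1 c_2\alpha + c_2^2\alpha^2 \geq \tfrac{3}{4}c_2^2\alpha^2 \geq \tfrac{3}{4}\alpha^2$, hence $|\theta_1 - \theta_2| \geq \tfrac{3}{2}\alpha^2$; combined with $|\theta_1 - \theta_2| \leq |\theta_1| + |\theta_2| \leq M(\theta) + \sqrt{M(\theta)} \leq 2M(\theta)$ (valid since $|\theta_1| \leq M(\theta)$, $|\theta_2| \leq \sqrt{M(\theta)}$, and $M(\theta) \geq 1$), this forces $M(\theta) \geq \tfrac{3}{4}\alpha^2 = \tfrac{1}{4}|D_K|^{1/3} > \tfrac{1}{30}|D_K|^{1/3}$.

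The main obstacle is the lower bound in the case $c_2 \neq 0$: it requires the explicit identity for $|\theta_1 - \theta_2|^2$ that exploits the specific Galois/Kummer structure of $K$ to produce a bound of order $|D_K|^{1/3}$, which strictly beats Silverman's generic $|D_K|^{1/4}$. The constants $4/3$ and $1/30$ in the statement allow considerable slack relative to the sharper bounds (ratio approaching $1$ and equal to $1/4$) sketched above, which should absorb any edge cases arising from the density step.
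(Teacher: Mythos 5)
Your proposal is correct, but it takes a genuinely different route from the paper in both halves of the argument. The paper works with $K=\Q(\sqrt[3]{p})$, $p$ prime with $p\not\equiv\pm1\pmod 9$ (so infinitude is immediate from Dirichlet), gets the upper bound from the nearest-integer translate $\theta-k$ with $-\tfrac12<\theta-k<\tfrac12$, and proves the lower bound by embedding $\Ox_K$ via the Minkowski map, running Gram--Schmidt on the image of $\{1,\theta,\theta^2\}$ to force the $\theta^2$-coefficient of any small element to vanish, and then doing a case analysis on $a+b\theta$; this yields the constant $\tfrac1{10}\theta^2=\tfrac1{30}|D_K|^{1/3}$. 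You instead take $K=\Q(\sqrt[3]{k^3-1})$ so that $k-\alpha$ is a unit, which makes the upper bound a one-line norm computation with ratio tending to $1$ (well under $\tfrac43$), and you replace the lattice/Gram--Schmidt machinery by two explicit identities for $|\theta_2|^2$ and $|\theta_1-\theta_2|^2$ plus elementary inequalities, obtaining the stronger lower bound $\tfrac14|D_K|^{1/3}$ uniformly over all $\theta\in\Ox_K\setminus\Z$ (which, in a cubic field, are exactly the primitive elements). Your lower-bound argument is more elementary and gives a better constant; the paper's choice of family makes the infinitude step trivial.

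Two small points you should tighten. First, your infinitude claim: the Dirichlet remark is a red herring (nothing forces $k=p+1$), and what you actually need is infinitely many $k\equiv 2\pmod 3$ with $k^3-1$ squarefree. This is true and provable by the same kind of squarefree-value sieve the paper itself invokes (Estermann): for $k\equiv2\pmod3$ one has $\gcd(k-1,k^2+k+1)=1$, so squarefreeness of $k^3-1$ reduces to simultaneous squarefreeness of a linear and a quadratic polynomial, but this should be cited or argued rather than asserted as "standard density." Second, in the case $c_2=0$ the step $M(\theta)\geq|\theta_2|^2$ requires $|\theta_2|\geq1$; this is automatic since $\tfrac34c_1^2\alpha^2\geq\tfrac34\cdot 7^{2/3}>1$, but it deserves a sentence. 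Neither issue affects the validity of the approach.
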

In Section~\ref{section:algorithm} we describe an algorithm that given a discriminant bound, $N$ computes $M(\Ox_K)$ for all number fields $K$ with absolute value of their discriminant less than $N$, and present  the resulting data for $N=10,000$ in Figure~\ref{figure:data} .

\subsection{Background}

Silverman's result \cite{MR747871} can be stated as the lower bound bound  
 \[ d^{-\frac{d}{2(d-1)}}|D_K|^{\frac1{2(d-1)}}\leq  M(K)  \]
 for $d=[K:\Q]\geq 2$.  Ruppert \cite{MR1624340} (page 18) and Masser \cite{MR1363143} (Proposition 1) provided a family of fields for which this lower bound  cannot be improved by a larger exponent on the discriminant. (See also \cite{MR3413883}.)  They considered the Kummerian fields $K$ of degree $d$ obtained by adjoining a root of $px^d-q$ for $p$ and $q$ certain  well-chosen  primes.  For these fields    $M(K)\leq \sqrt{2} |D_K|^{\frac1{2(d-1)}}$. 
Vaaler and Widmer  \cite{MR3413883} showed that  for composite $d$ there are number fields $K$ for which no constant $c_d$ satisfies
\[
M(K)  \leq c_d |D_K|^{\frac1{2(d-1)}}, 
\]
demonstrating that there are fields whose minimal Mahler measure grows faster than Silverman's lower bound.

Upper bounds, some of which are dependent on the truth of the generalized Riemann hypothesis (GRH), show that $M(\Ox_K)$ is bounded above by a constant times $|D_K|^{\frac12}$. 
Ruppert \cite{MR1624340}(Proposition 3)  proved 
 that if $K$ is totally real of prime degree then there is an integral primitive element $\alpha\in \Ox_K$ such that 
\[
 M(\alpha)\leq    |D_K|^{\frac12}.
\] 
(Ruppert's proof showed that the naive height of $\alpha$ is bounded above by $2^d  |D_K|^{\frac12}$ and can be modified to demonstrate the given bound.)
His argument  can be extended to all number fields of prime degree using an extension of Minkowski's linear forms theorem.
Vaaler and Widmer  \cite{MR3074815}  proved that if $K$ is not totally complex, and $r_2$ denotes the number of complex places of $K$,  then there is a primitive element $\alpha$ such that 
\[
M(\alpha) \leq \Big(\frac{2}{\pi}\Big)^{r_2} |D_K|^{\frac1{2}}.
\]
For general fields they show that an upper bound with the same dependence on $D_K$ exists under the assumption of the truth of the generalized Riemann hypothesis.

For quadratic number fields, the upper and lower bounds have the same exponent (equal to one half) on the  discriminant and $\tfrac12 \sqrt{|D_K|} \leq M(K) \leq \sqrt{|D_K|}$.  Cochrane et al \cite{MR3463562} showed this for real quadratics using elementary techniques.  They also  computed $M(K)$ where $K=\Q(\sqrt{a})$  for all positive, square-free $a$  up to one million and conjectured that 
\[ \lim_{a \rightarrow \infty} \frac{M(K)}{\sqrt{|D_K|}} =\frac12\]
and proved that the associated limit inferior is $\tfrac12$.

Our work considers the integral Mahler measure in the  cubic case, where the aforementioned bounds specialize to 
\[ 3^{-\frac{3}{4}}|D_K|^{\frac1{4}}\leq  M(\Ox_K) \leq  |D_K|^{\frac12}. \]
Theorem~\ref{thm:lowerbound} shows that the exponent $\tfrac14$ on the discriminant is sharp in the lower bound, whereas  Theorem~\ref{thm:exponent1/3} shows that there are infinitely many fields where $M(\Ox_K)$ is related to $|D_K|$ by an intermediate exponent,  $\tfrac13$.  Figure~\ref{figure:data} shows the results of our algorithm.  We used a supercomputer to compute $M(\Ox_K)$ for all cubic number fields $K$ with absolute value of their discriminant less than $10,000$.   Our results  leave open the question if the upper bound is achieved.   
\begin{question} Is there a constant $c>0$ such that for   infinitely many  cubic fields $K$,  $M(\Ox_{K})>c|D_{K}|^{\tfrac12}$? \end{question}

\section{Lower Bound Examples}\label{section:lowerbounds}

In this section we prove Theorem~\ref{thm:lowerbound} by demonstrating  infinitely many number fields $K$ in each of the three categories of Galois cubics, totally real non-Galois cubics, and non-totally real cubics, which have $M(\Ox_K)$ equal to a constant less than $\sqrt{2}$  times $|D_K|^{\frac14}$.

\begin{prop}
There are infinitely many cyclic cubic number fields $K$ such that \[ M(\Ox_K)< 2^{\frac12} |D_K|^{\frac14}.\]
\end{prop}

\begin{proof}

Let $K_{n}$ be the splitting field of $f_{n}(x)=x^3+nx^2-(n+3)x+1$.  These fields have been extensively studied and are often referred to as ``the simplest cubic fields''.  (See, for example  \cite{MR352049} with  the substitution of $-x$ for $x$ in the minimal polynomial.) 
The field  $K_n$ is a Galois cubic for $n\geq 0$ and  the field discriminant equals the discriminant of the polynomial $f_n$.  Specifically, we have
\[ D_{K_n}=\text{disc}(f_n)=(n^2+3n+9)^2. \] 
An elementary computation using the intermediate value theorem shows that for $n$ sufficiently large
 the three real  roots $r_1<r_2<r_3$ of $f_n$ satisfy 
\[ -(n+1+\tfrac2n)  <r_1< -(n+1), \quad 0<r_2<\tfrac{1}{n}, \quad 1<r_3<1+\tfrac{1}n.\]
Therefore, 
\[ 
M(f_n)< (n+1+\tfrac{2}n)(1+\tfrac{1}{n}) =n+2+\tfrac3n+\tfrac{2}{n^2}
\]
and so for $n$ sufficiently large,  we have 
\[
M(f_n)^2 <  n^2+ 4n + 10    + \tfrac{16}n + \tfrac{17}{n^2} + \tfrac{12}{n^3}+ \tfrac{4}{n^4} 
<
2 (n^2+3n+9) = 2 |D_K|^{\tfrac12}.
\]

\end{proof}

Next we consider totally real, non-Galois cubics. 
\begin{prop}\label{prop:totrealnongal}
There are infinitely many totally real non-Galois cubic number fields $K$ such that 
\[ M(\Ox_K) < |D_K|^{\frac14}.\]
\end{prop}

\begin{proof}
Let $g_n(x) = x^3-nx^2+n$ and $K_n=\Q(\alpha_n)$ for a (preferred) root $\alpha_n$ of $g_n$.  The discriminant of $g_n$ is $\text{disc}(g_n) = n^2(4n^2-27)$. For $n>2$ the discriminant is positive and therefore the field discriminant, $D_{K_n}$,  is positive and $K_n$ is totally real.  If $n$ is square-free, then $g_n$ is Eisenstein  for any prime $p$ dividing $n$.  Such a prime $p$ is totally ramified in $K$ and divides $D_{K_n}$, so if $n$ is square-free then $n$ divides $D_{K_n}$.

  By  \cite{MR1512732} there are infinitely many $n$ such that $4n^2-27$ is  square-free. Ricci  \cite{ricci} later determined an asymptotic formula for the number of square-free solutions.  This can be generalized to prime values of $n$ (see \cite{MR3327840}), and we conclude that there are infinitely many prime values of  $n$ such that $4n^2-27$ is square-free.  
For these $n$, by the above discussion,  the   $\text{disc}(g_n)$ must equal $D_{K_n}$, as $\text{disc}(g_n)$ equals a square times $D_{K_n}$.   The resulting field $K_n$ is a non-Galois totally real field with discriminant $D_{K_n}=n^2(4n^2-27)$.

By the intermediate value theorem,  for $n>6$ the polynomial  $g_n$ has real roots $r_1<r_2<r_3$ satisfying 
\[
-1<r_1<-1+\tfrac1n, \quad 1<r_2<1+\tfrac1n, \quad n-\tfrac2n<r_3<n.
\]
We conclude that 
\[ M(\Ox_{K_n}) < (1+\tfrac1n) n =n+1.  \]
An elementary  calculation shows that for $n$ sufficiently large
\[
M(\Ox_K)^4 < (n+1)^4 = n^4+4n^3+6n^2+4n + 1 <  4n^4-27n^2 = D_{K_n}. 
\]

\end{proof}

Finally, we consider    cubics with a complex place. 

\begin{prop}
There are infinitely many non-totally real   cubic number fields $K$ such that 
\[ M(\Ox_K) < 2^{-\frac12} |D_K|^{\frac14}.\]
\end{prop}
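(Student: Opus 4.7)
The plan is to mirror the proof of the preceding proposition using the family $h_n(x) = x^3 - nx^2 - n$, where the sign change (compared to $g_n$) is designed to make the discriminant negative. A direct calculation gives $d(h_n) = -n^2(4n^2+27) < 0$, so whenever $h_n$ is irreducible, the resulting field $K_n = \Q(\alpha_n)$ is non-totally real. For $n$ prime, $h_n$ is Eisenstein at $n$, hence irreducible, and $n$ does not divide the index $[\Ox_{K_n}:\Z[\alpha_n]]$. Invoking Estermann \cite{MR1512732} exactly as in the previous proposition yields infinitely many primes $n$ for which $4n^2+27$ is square-free; for these $n$ the index equals $1$ and $D_{K_n} = -n^2(4n^2+27)$.

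For the Mahler measure bound I would locate the real root $r_1$ of $h_n$ via the intermediate value theorem. One checks $h_n(n) = -n < 0$, while a short expansion gives $h_n(n+\tfrac{1}{n}) = \tfrac{2}{n} + \tfrac{1}{n^3} > 0$, so $n < r_1 < n + \tfrac{1}{n}$. Since $r_1 r_2 r_3 = n$ with $r_2, r_3$ a complex conjugate pair, $|r_2|^2 = n/r_1 < 1$, so the complex roots contribute nothing to $M$. Hence $M(\alpha_n) = r_1 < n + \tfrac{1}{n}$.

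To finish, I would verify the inequality $M(\alpha_n)^4 < \tfrac{1}{4}|D_{K_n}|$. Expanding,
\[
\left(n+\tfrac{1}{n}\right)^4 = n^4 + 4n^2 + 6 + \tfrac{4}{n^2} + \tfrac{1}{n^4}, \qquad \tfrac{1}{4}|D_{K_n}| = n^4 + \tfrac{27}{4}n^2,
\]
so their difference is $\tfrac{11}{4}n^2 - 6 - O(n^{-2})$, which is positive for all sufficiently large $n$. It follows that $M(\Ox_{K_n}) \leq M(\alpha_n) < 2^{-1/2}|D_{K_n}|^{1/4}$, and since $|D_{K_n}| \to \infty$ only finitely many of these fields can coincide, giving infinitely many distinct examples. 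The delicate point, which is the real obstacle, is that the ratio $M/|D_K|^{1/4}$ for this family tends to exactly $2^{-1/2}$; the crude estimate $r_1 < n+1$ fails to beat the target constant, which is why I would invest effort in the sharper localization $r_1 < n + \tfrac{1}{n}$.
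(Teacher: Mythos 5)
Your proof is correct and is essentially the paper's argument: your polynomial $x^3-nx^2-n$ is the paper's $h_n(x)=x^3+nx^2+n$ under $x\mapsto -x$ (so the same fields), and you use the same Eisenstein-plus-Estermann discriminant computation and the same intermediate-value localization of the real root. Your sharper bound $n<r_1<n+\tfrac1n$ with the expansion of $(n+\tfrac1n)^4$ is in fact slightly more careful than the paper's write-up, which asserts $|r|<n$ even though its own root estimate gives $n<|r|<n+\tfrac1n$; your version closes that small gap while following the identical route.
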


\begin{proof}
Let $h_n(x) = x^3+nx^2+n$ and $K_n=\Q(\alpha_n)$ for a (preferred) root $\alpha_n$ of $h_n$.  The discriminant of $h_n$ is $\text{disc}(h_n) = -n^2(4n^2+27)$ and therefore $K_n$ has negative discriminant and  is not totally real.  If $n$ is square-free, then $h_n$ is Eisenstein for any prime $p$ dividing $n$. Such a prime is totally ramified in $K$ and  divides $D_{K_n}$, so if $n$ is square-free then $n$ divides $D_{K_n}$.  As in  the proof of Proposition~\ref{prop:totrealnongal}, there are infinitely many prime values of $n$ such that $4n^2+27$ is square-free.  For these $n$ the discriminant $\text{disc}(h_n)$ must equal $D_{K_n}$.  The resulting field $K_n$ is a non-Galois non-totally real field with discriminant $D_{K_n}= -n^2(4n^2+27)$. 

By the intermediate value theorem, for $n>0$  the real root $r$ of $h_n$ satisfies 
\[
-n-\tfrac1{n} < r< -n.
\]
Let $\tau$ and $\bar{\tau}$ be the non-real roots of $h_n$.  Using elementary symmetric functions,  
\[
-n = r\tau \bar{\tau}=r|\tau|^2
\]
and since $|r|\geq |n|$ we conclude that $|\tau|^2<1$.

Therefore $M(h_n)=|r|$ since $\tau$ and $\bar{\tau}$ have modulus less than one.  We have $M(h_n) = |r| < |n +\tfrac{1}{n}|$
 and 
\[
M(h_n)^4 <(n+\tfrac1n)^4 < \tfrac14n^2(4n^2+27)= \tfrac14 |D_{K_n}| 
\]
where the second inequality holds for $n>1$.

\end{proof}

In the above proofs, one can also make a similar deductions, with a less optimal constant by considering the naive height of the minimal polynomial.

\section{Exponent of $1/3$ Example}\label{section:1/3example}

In this section we prove Theorem~\ref{thm:exponent1/3}.  First we collect some information about Minkowski embeddings.

\subsection{Minkowski Embedding}

Let $K$ be a number field of degree $d=r_1+2r_2$ where $r_1$ is the number of real places and $r_2$ is the number of complex places of $K$.   Let $\phi_1,\dots, \phi_{r_1}$ be the real places of $K$ and $\tau_1,\dots, \tau_{r_2}$ be representatives of the complex places of $K$.  
We define the Minkowski embedding   $\varphi:  K\rightarrow \R^{r_1+r_2}$  as
\[
\varphi(\alpha) = \langle \phi_1(\alpha), \dots, \phi_{r_1}(\alpha), \Re(\tau_1(\alpha)), \Im(\tau_1(\alpha)), \dots,\Re(\tau_{r_2}(\alpha)), \Im(\tau_{r_2}(\alpha))    \rangle.
\]
Let $\| \cdot \|$ denote the (vector)  length  in $\R^{r_1+2r_2}$ and let ${\bf 1}=\varphi(1)$.

\begin{lemma}\label{lemma:mmlength}
Let $K$ be a number field and $\alpha \in \Ox_K$. Then 
\[
M(\alpha)^{1/d} \leq \|  \varphi(\alpha) \| \leq \sqrt{r_1+r_2} \: M(\alpha).
\]
\end{lemma}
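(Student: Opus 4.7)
The plan is to sandwich $\|\varphi(\alpha)\|$ between powers of $M(\alpha)$ using two elementary facts about the conjugates $\sigma(\alpha)$ of a primitive $\alpha\in\Ox_K\setminus\{0\}$ (where $\sigma$ ranges over the $d$ embeddings $K\to\C$). First, each conjugate satisfies $|\sigma(\alpha)|\le M(\alpha)$, because either $|\sigma(\alpha)|\ge 1$ (so it appears as one of the factors in the product defining $M(\alpha)$, alongside a leading coefficient $c\ge 1$ and other factors $\ge 1$) or $|\sigma(\alpha)|<1\le M(\alpha)$. Second, $M(\alpha)\le \prod_\sigma \max(1,|\sigma(\alpha)|)$, which for a primitive algebraic integer (where $c=1$ and the product has $d$ factors) is in fact an equality.

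For the upper bound, I will expand
\[
\|\varphi(\alpha)\|^2=\sum_{i=1}^{r_1}\phi_i(\alpha)^2+\sum_{j=1}^{r_2}|\tau_j(\alpha)|^2
\]
as a sum of $r_1+r_2$ nonnegative terms, use the first fact to bound each summand by $M(\alpha)^2$, and take the square root. This gives $\|\varphi(\alpha)\|\le\sqrt{r_1+r_2}\,M(\alpha)$.

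For the lower bound, pairing each complex embedding with its conjugate (which has the same modulus) rewrites the second fact as
\[
M(\alpha)\le\prod_{i=1}^{r_1}\max(1,|\phi_i(\alpha)|)\prod_{j=1}^{r_2}\max(1,|\tau_j(\alpha)|)^2,
\]
a product with $r_1+2r_2=d$ factors counted with multiplicity. Since $\phi_i(\alpha)^2$ and $|\tau_j(\alpha)|^2$ are summands of $\|\varphi(\alpha)\|^2$, each base is bounded by $\|\varphi(\alpha)\|$, giving $M(\alpha)\le\max(1,\|\varphi(\alpha)\|)^d$. To drop the $\max$, I need $\|\varphi(\alpha)\|\ge 1$, which follows for nonzero $\alpha\in\Ox_K$ from the norm bound $\prod_\sigma|\sigma(\alpha)|\ge 1$ combined with AM-GM on $\{|\sigma(\alpha)|^2\}$: this yields $\sum_\sigma|\sigma(\alpha)|^2\ge d$, and since $\|\varphi(\alpha)\|^2\ge\tfrac12\sum_\sigma|\sigma(\alpha)|^2$ one obtains $\|\varphi(\alpha)\|^2\ge d/2\ge 1$ for $d\ge 2$ (the $d=1$ case is immediate).

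I do not anticipate any genuine obstacle; both halves are a careful bookkeeping of how real and complex embeddings contribute to the Minkowski norm versus the Mahler measure. The only mild subtlety is that if $\alpha$ happens to have degree $e<d$ over $\Q$, then the product over embeddings of $K$ equals $M(\alpha)^{d/e}$ rather than $M(\alpha)$; however, $M(\alpha)\ge 1$ and $e\le d$ together imply $M(\alpha)^{1/d}\le M(\alpha)^{1/e}$, so the same conclusion still goes through.
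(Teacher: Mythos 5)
Your proof is correct. The upper bound is exactly the paper's argument: write $\|\varphi(\alpha)\|^2$ as a sum of $r_1+r_2$ nonnegative terms, one per archimedean place, and bound each by $M(\alpha)^2$ via $|\sigma(\alpha)|\le M(\alpha)$. For the lower bound you take a genuinely more careful route than the printed proof. The paper's chain reads $M(\alpha)^d\le\max_i|\alpha_i|^d\le\bigl(\sum_i|\alpha_i|^2\bigr)^{1/2}$, which as written is garbled (both inequalities fail in general; the intended argument is presumably $M(\alpha)\le(\max_i|\alpha_i|)^d$ and $\max_i|\alpha_i|\le\|\varphi(\alpha)\|$, with exponents misplaced), and even the intended version silently uses that $\alpha$ is a nonzero algebraic integer, so that $\max_i|\alpha_i|\ge 1$ and the minimal polynomial is monic. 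You instead bound $M(\alpha)$ by $\prod_\sigma\max(1,|\sigma(\alpha)|)$ over all $d$ embeddings, bound each base by $\|\varphi(\alpha)\|$ since its square is a summand of $\|\varphi(\alpha)\|^2$, and remove the $\max(1,\cdot)$ by proving $\|\varphi(\alpha)\|\ge 1$ from $|N_{K/\Q}(\alpha)|\ge 1$ together with AM--GM and $\|\varphi(\alpha)\|^2\ge\tfrac12\sum_\sigma|\sigma(\alpha)|^2$; you also dispose of the non-primitive case via $M(\alpha)^{1/d}\le M(\alpha)^{1/e}$. What your version buys is exactly the hypotheses and steps the paper leaves implicit or mangles: the restriction to $\alpha\in\Ox_K\setminus\{0\}$ (the left inequality is false for general $\alpha\in K$, e.g.\ $\alpha=1/n$), the point that $\|\varphi(\alpha)\|\ge 1$ is needed before raising to the $d$-th power, and the treatment of non-primitive elements. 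The paper's intended argument is shorter, since $\max_i|\alpha_i|\le\|\varphi(\alpha)\|$ is immediate, but as printed it does not establish the inequality; yours does.
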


\begin{proof}
Let $\alpha_1, \dots, \alpha_d$ be the conjugates of $\alpha$. 
We have that 
\begin{align*}
 \| \varphi(\alpha)\|^2 
 & =  \sum_{i=1}^{r_1} | \phi_i(\alpha) |^2 + \sum_{j=1}^{r_2} \Big( |\Re(\tau_j(\alpha))|^2 +  | \Im(\tau_j(\alpha))|^2 \Big)  \\
& = \sum_{i=1}^{r_1} | \phi_i(\alpha) |^2 + \sum_{j=1}^{r_2}   | \tau_j(\alpha) |^2.
\end{align*}
Therefore, 
\begin{align*}
 \| \varphi(\alpha)\|^2 
&  \leq (r_1+r_2) \max \{  | \phi_i(\alpha) |^2 , | \tau_j(\alpha) |^2 \}  \\
& \leq  (r_1+r_2) ( \max_{i} |\alpha_i|)^2  \leq (r_1+r_2) M(\alpha)^2.
\end{align*}
This proves the right inequality.  
To prove the left inequality, since $M(\alpha)\leq \max_i |\alpha_i|^d$, we have 
\[
M(\alpha)^{1/d}    \leq  \max_{i} |\alpha_i|   \leq   \Big(\sum_{i=1}^d|\alpha_i|^2 \Big)^{\frac12}=  \| \varphi(\alpha)\|. \]

\end{proof}

\subsection{Proof of Theorem~\ref{thm:exponent1/3} }

Consider the polynomial  $f(x)=x^3-p$ for $p$ prime.
As such, $f$ has roots $\theta$, $\omega \theta$ and $\omega^2 \theta$ for $\theta\in \R$ and $\omega = \tfrac12(-1+\sqrt{-3})$  a primitive third root of unity.  Let $K=\Q(\theta)$.
Assume that $p \not \equiv \pm 1 \pmod 9$ so that $D_K=-27p^2$ and $\{1,\theta,\theta^2\}$ is an integral basis for $\Ox_K$.  (See \cite{MR0457396} page 28, for example.) Further, let $k$ be an integer satisfying  $-\tfrac12 < \alpha <\tfrac12$ for  $\alpha = \theta-k$.   
Theorem~\ref{thm:exponent1/3} follows from Lemma~\ref{lemma:exponent1} and Lemma~\ref{lemma:exponent2} below.

\begin{lemma}\label{lemma:exponent1}
The Mahler measure of $\alpha$ is at most $\tfrac43 |D_K|^{\frac13}$.
\end{lemma}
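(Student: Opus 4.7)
The plan is to compute $M(\alpha)$ explicitly in terms of $\theta = p^{1/3}$ and $k$, then use the centering condition $|\theta-k|<\tfrac12$ to bound $k$ in terms of $\theta$, and finally compare with $|D_K|^{1/3}=(27p^2)^{1/3}=3\theta^2$.

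First I would identify the conjugates of $\alpha$. Since the roots of $f_p$ are $\theta,\omega\theta,\omega^2\theta$, the conjugates of $\alpha=\theta-k$ are $\alpha,\ \omega\theta-k,\ \omega^2\theta-k$. By hypothesis $|\alpha|<\tfrac12<1$, so $\alpha$ contributes nothing to the Mahler product. Since $\omega^2=\bar\omega$, the other two conjugates are complex conjugates and have the same modulus; using $\omega^3=1$ and $1+\omega+\omega^2=0$ a direct expansion gives
\[
|\omega\theta-k|^2 \;=\;(\omega\theta-k)(\omega^2\theta-k)\;=\;\theta^2+k\theta+k^2.
\]
For any prime $p$ we have $\theta\geq 2^{1/3}>1$ and (since $\theta>\tfrac12$) $k\geq 1$, so this quantity is at least $1$. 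Consequently both non-real conjugates have modulus $\geq 1$ and
\[
M(\alpha)\;=\;|\omega\theta-k|\cdot|\omega^2\theta-k|\;=\;\theta^2+k\theta+k^2.
\]

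Next I would exploit the centering. Writing $k=\theta+s$ with $|s|<\tfrac12$,
\[
M(\alpha)\;=\;\theta^2+(\theta+s)\theta+(\theta+s)^2\;=\;3\theta^2+3s\theta+s^2,
\]
which is maximized at $s=\tfrac12$, giving $M(\alpha)\leq 3\theta^2+\tfrac{3}{2}\theta+\tfrac14$. Since $|D_K|^{1/3}=(27p^2)^{1/3}=3\theta^2$, the target inequality is $M(\alpha)\leq 4\theta^2$, which reduces to $\theta^2\geq\tfrac{3}{2}\theta+\tfrac14$. This holds whenever $\theta\geq(3+\sqrt{13})/4$, i.e.\ for $p\geq 5$; the remaining primes $p\in\{2,3\}$ can be verified by plugging the exact value $k=1$ into $\theta^2+\theta+1$ and comparing with $4\theta^2$ directly.

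The main obstacle is really just pinning down the identity $|\omega\theta-k|^2=\theta^2+k\theta+k^2$; everything else is an elementary one-variable inequality. A secondary nuisance is handling the smallest admissible primes, where the worst-case bound on $s$ is slightly loose, but those cases can be absorbed by sharper direct estimates using the actual value of $k$.
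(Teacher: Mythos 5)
Your proof is correct and follows essentially the same route as the paper: expand $|\omega\theta-k|^2=\theta^2+k\theta+k^2$, use $k<\theta+\tfrac12$ to bound this by $3\theta^2+\tfrac32\theta+\tfrac14$, and compare with $4\theta^2=\tfrac43|D_K|^{\frac13}$. Your only additions are the explicit check that the non-real conjugates lie outside the unit circle and the treatment of $p\in\{2,3\}$, where the paper simply invokes ``$p$ large enough,'' which suffices for its purpose of producing infinitely many fields.
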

\begin{proof}
The conjugates of $\alpha$ are $\alpha=\theta-k$, $\theta \omega-k$ and $\theta \omega^2-k=\theta \bar{\omega}-k$. Since $|\alpha|<1$ the Mahler measure of $\alpha$  is 
\[
| \theta \omega-k|^2 =(\theta \omega-k)(\theta \bar{\omega}-k) = \theta^2 \omega^3 -k\theta(\omega+\bar{\omega})+k^2 = 
\theta^2+k^2+k\theta. \]
Since $0<k<\theta+\tfrac12$ this is bounded above by $3\theta^2+\tfrac32\theta+\tfrac14$ which is less that $4\theta^2$ for $p\geq 5$. Therefore, 
\[ M(\alpha) = | \theta \omega-k|^2 <4\theta^2 =4 p^{\frac23}= \tfrac43 |D_K|^{\frac13}.
\] 
\end{proof}

Theorem~\ref{thm:exponent1/3} follows from the next lemma. 

\begin{lemma}\label{lemma:exponent2}
There is no $\beta \in \Ox_K-\Z$ with   $M(\beta) < \tfrac1{30} |D_K|^{\frac13}.$ 
\end{lemma}

\begin{proof}

We will assume by way of contradiction that  $M(\beta)<  \tfrac1{10} p^{\frac23}=  \tfrac1{10}\theta^2= \tfrac1{30} |D_K|^{\frac13}$. 
First, we will show that under this assumption if $\beta=a+b\theta+c\theta^2$ with $a,b,c\in \Z$ then $c=0$.
To this end, we consider the Minkowski embedding, $\varphi$, of $\Ox_K$ induced by sending basis elements $1, \theta$ and $\theta^2$ to vectors in $\mathbb{R}^3$ as follows:
\begin{align*}
1 & \mapsto {\mathbf v_1}= {\bf 1}=\langle 1, 1,0 \rangle \\
\theta  & \mapsto {\mathbf v_2}=\langle \theta, \Re(\omega \theta), \Im(\omega \theta) \rangle  = \langle \theta, -\tfrac12 \theta,  \tfrac{\sqrt{3}}2 \theta \rangle = \tfrac{\theta}2 \langle 2, -1 ,  \sqrt{3}  \rangle  \\
\theta^2 & \mapsto {\mathbf v_3}=\langle \theta^2, \Re(\omega^2 \theta^2), \Im(\omega^2 \theta^2) \rangle= \tfrac{\theta^2}2 \langle 2, -1 ,  -\sqrt{3}  \rangle.
\end{align*}
By Lemma~\ref{lemma:mmlength} the length of the Minkowski embedding is related to Mahler measure by  \[ M(\beta)^{\frac13}\leq \| \varphi(\beta) \| \leq \sqrt{2} M(\beta).\]
Our bounded Mahler measure assumption implies that  
\[ \| \varphi(\beta)\|^2 \leq 2 M(\beta)^2<  \tfrac{2}{100}p^{\frac43}=\tfrac{1}{50} \theta^4.\]

We perform the Gram-Schmidt algorithm to determine an orthogonal basis consisting of ${\mathbf v_1^*}= \mathbf{ v_1}$, $\mathbf{v_2^*}$ and $\mathbf{v_3^*}$ for $K$.  Explicitly, 
\[ \mathbf{v_2^*} = \mathbf{v_2 } - \frac{\mathbf{v_2} \cdot\mathbf{ v_1^*}}{\| \mathbf{v_1^*} \|^2}\mathbf{v_1^*}, \quad \mathbf{v_3^*} = \mathbf{v_3}-\frac{\mathbf{v_3} \cdot \mathbf{v_1^*}}{\| \mathbf{ v_1^*} \|^2}\mathbf{v_1^*} -\frac{ \mathbf{v_3}\cdot \mathbf{v_2^*}}{\|\mathbf{v_2^*}\|^2} \mathbf{v_2^*}. \]
An elementary calculation shows that 
\[ \mathbf{v_2^*}= \mathbf{v_2}- \tfrac14 \theta \mathbf{v_1^*}   = \tfrac14 \theta \langle3, -3  ,   2\sqrt{3} \rangle  \]
and 
\[ 
\mathbf{v_3^*}  =  \mathbf{v_3}- \tfrac14 \theta^2 \mathbf{v_1^*} - \tfrac15 \theta \mathbf{ v_2^*}    = \tfrac35\theta^2 \langle  1,-1,-\sqrt{3}  \rangle.  
\] 
The lengths of the Gram-Schmidt basis elements are
\[ 
\| \mathbf{ v_1^*} \|^2 = 2, \| \mathbf{ v_2^*} \|^2 = \tfrac{15}8 \theta^2, \| \mathbf{v_3^*}\|^2 =   \tfrac{9}{5}\theta^4. 
\]

We   write $\beta$ in terms of the integral basis $\{1,\theta, \theta^2\}$ as $\beta = a+b\theta + c \theta^2$ with $a,b,c\in \Z$ and so that  $b$ and $c$ are not simultaneously zero.  We relate the Minkowski embedding of the integral basis to the Gram-Schmidt basis as follows:  
\begin{align*}
\varphi(\beta) & = a\varphi(1) + b \varphi(\theta) + c\varphi(\theta^2)  \\
& = a \mathbf{v_1}+ b \mathbf{v_2}+c \mathbf{v_3}  \\
& = a \mathbf{v_1^*} + b( \mathbf{v_2^*}+\tfrac14\theta  \mathbf{v_1^*}) + c ( \mathbf{v_3^*}+\tfrac14 \theta^2  \mathbf{v_1^* }+ \tfrac15 \theta \mathbf{v_2^*}) \\
& = (a+\tfrac{b}4\theta +\tfrac{c}4 \theta^2  ) \mathbf{v_1^*} + (b+\tfrac{c}5\theta   ) \mathbf{v_2^*}+c\mathbf{v_3^*} \\
& = A  \mathbf{v_1^*}+B\mathbf{v_2^*}+C\mathbf{v_3^*}
\end{align*}with $A,B,C\in \R$ and $C=c\in \Z$.
Since $\{ \mathbf{v_1^*},  \mathbf{v_2^*},  \mathbf{v_3^*}\}$ is a Gram-Schmidt basis 
\[
\| \varphi(\beta) \|^2 = A^2 \| \mathbf{v_1^*} \|^2 + B^2 \| \mathbf{v_2^*}\|^2 + C^2 \| \mathbf{v_2^*}\|^2 = 2A^2 + \tfrac{15}8 \theta^2 B^2+ \tfrac{9}5 \theta^4 C^2.
\]
We conclude that $|C|<1$ since our assumption implies that $\|\varphi(\beta)\|^2 <  \tfrac{1}{50} \theta^4$ and therefore we must have $\|\varphi(\beta)\|^2 <  \tfrac{9}5 \theta^4$. Therefore since $C=c\in \Z$ we must have $c=0$ and 
 $\beta = a+b\theta$ with $b\neq 0$.

It suffices to consider  $\beta = a+b\theta$ with  $a\geq 0$, $b\neq 0$ under the assumption that $M(\beta)< \tfrac1{10}\theta^2 = \tfrac{1}{10}p^{\frac23}$.  
If $a=0$ then $\beta=b\theta$ and $M(\beta)= |b^3\theta^3 |= |b|^3 p \not < \tfrac{1}{10}p^{\frac23}.$ Therefore we may assume  $a>0$ and $b\neq 0.$
The   minimal polynomial of $\beta$  is $x^3-3ax^2+3a^2x-(a^3+b^3p)$.   
 The conjugates of $\beta$ are 
\[
\beta = a+b\theta, a+\omega b \theta,  \overline{a+ \omega  b \theta}= a+\bar{\omega}b\theta
\]
so that  $M(\beta)=|a+b\theta|$, $|a+\omega b\theta|^2$ or $|a+b\theta | |a+\omega b\theta|^2$.

First, consider the case when $M(\beta)= |a+b\theta|$. This  occurs when   $| a+\omega b\theta|^2<1$, or equivalently when  \[ | a-\tfrac12bp^{\frac13} +\tfrac{\sqrt{-3}}2 bp^{\frac13}|^2<1.\] 
We conclude that  the imaginary part, $\tfrac{\sqrt{3}}2 bp^{\frac13}$  is between $-1$ and $1$ which necessitates that $b=0$. This cannot occur as  $\beta \not\in \Z$. 

It now suffices to consider the remaining two cases, so that  $M(\beta) \geq |a+\omega b\theta|^2$. If $b<0$ then as $a>0$, since $b\in \mathbb Z$,  we have 
\[ M(\beta)\geq |a+\omega b\theta|^2=a^2-ab\theta +b^2 \theta^2 >b^2 \theta^2 >  \tfrac1{10} \theta^2 \]
which contradicts our bounded Mahler measure assumption.  Therefore $b>0$.  Consequently,  $a+b\theta>1$ and $M(\beta)$ is the absolute value of the product of all roots, which is the  absolute value of the constant term in the minimal polynomial for $a+b\theta$.  As such, since $a$ is a  non-negative integer and $b$ is a positive integer, 
\[ M(\beta) = a^3+b^3p= a^3+b^3\theta^3>\tfrac1{10}\theta^2\]
again contradicting our bounded Mahler measure assumption and  completing the proof.

\end{proof}

\section{Algorithm}\label{section:algorithm}

In this section we present our algorithm to compute $M(\Ox_K)$ for a all cubic fields $K$ with bounded $|D_K|$. The data is shown in Figure~\ref{figure:data}. First we prove some lemmas that  establish the  finite search bounds used in the algorithm.

\begin{figure}[h!]\label{figure:data}
\begin{center}
\includegraphics[scale=0.37]{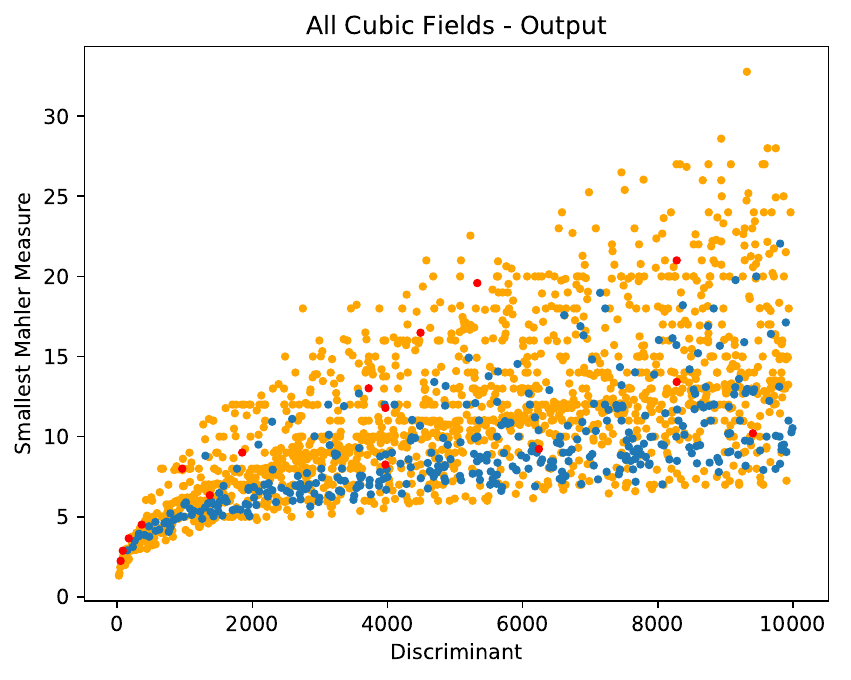} 
\includegraphics[scale=0.37]{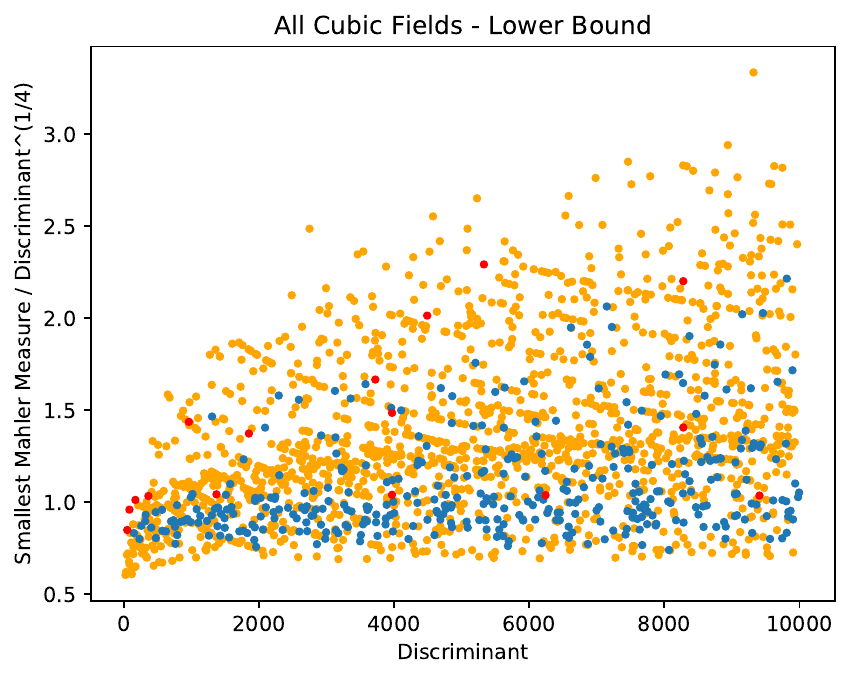} \\

\

\includegraphics[scale=0.37]{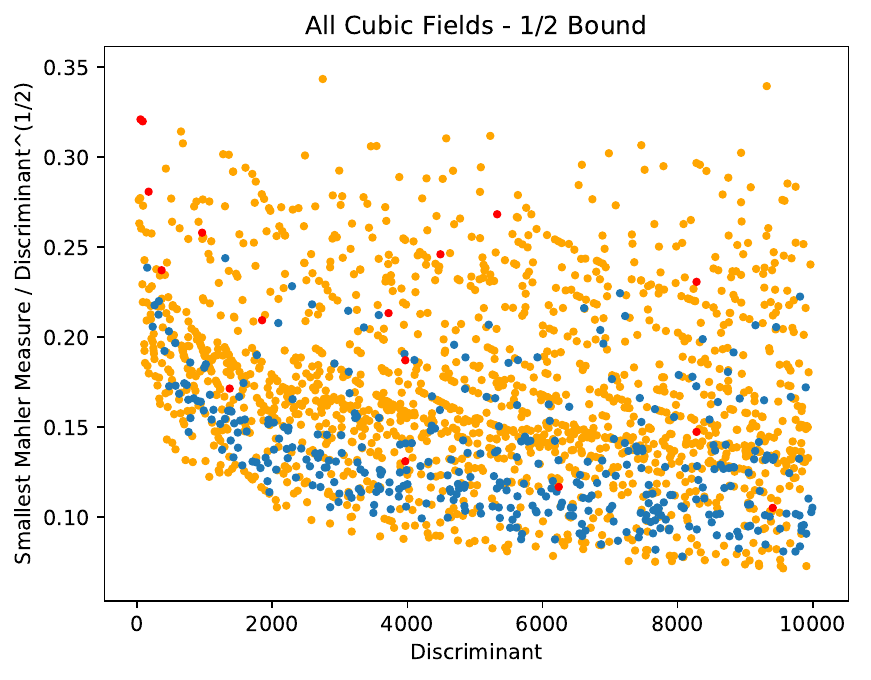} 
\caption{For $K$ cubic, the figures are  $M(\Ox_K)$, $ |D_K|^{-\frac14}M(\Ox_K)$  and $ |D_K|^{-\frac12}M(\Ox_K)$ (respectively) for $K$ with $|D_K|\leq 10,000$.  
Blue indicates non-real fields, orange is non-cyclic totally real fields, and red is cyclic fields.   }

\end{center}
\end{figure}

\subsection{Lemmas}
Let $K$ be a cubic number field with Minkowski embedding $\varphi$.  As such,  $\varphi(K)$ is dense in $\R^3$ and $\varphi(\Ox_K)$ is a lattice in $\R^3$.  The vector $\mathbf{1} = \varphi(1)$ is  given by 
$\mathbf{1} = \langle 1,1,1\rangle$ when $K$ is totally real and $\mathbf{1}=\langle 1,1,0\rangle$ when $K$ has a complex place.  

Let $\Bx= \{ \bm{\beta}_1, \bm{\beta}_2, \bm{\beta}_3\}$  be an LLL basis for $\varphi(\Ox_K)$ and let  $\Bx^*= \{ \bm{\beta}_1^*, \bm{\beta}_2^*, \bm{\beta}_3^*\}$ be the basis obtained by performing Gram-Schmidt reduction on $\Bx$.

\begin{lemma}\label{lemma:searchbounds}
Let $\bm{\alpha}$ be an element of the lattice $\varphi(\Ox_K)$ and $a,b,c\in \Z$ so that $\bm{\alpha} = a \bm{\beta}_1+b \bm{\beta}_2+c \bm{\beta}_3$.
If  $\| \bm{\alpha} \| \leq C$ then 
\[
|a| \leq   C  \Big(  \frac{1}{\| \bm{\beta}_1^*\|} +  \frac{\tfrac12}{\| \bm{\beta}_2^*\|} + \frac{ \tfrac34 }{\| \bm{\beta}_3^*\|} \Big) , \ \ \ 
|b| \leq  C  \Big( \frac{1}{\| \bm{\beta}_2^*\|} +    \frac{\tfrac12}{\| \bm{\beta}_3^*\|}\Big),  \ \ \
|c| \leq  C  \Big( \frac{1}{\| \bm{\beta}_3^*\|}\Big).
\]
\end{lemma}

\begin{proof}
We write $\bm{\alpha} = a_* \bm{\beta}_1^* + b_* \bm{\beta}_2^* + c_* \bm{\beta}_3^*$ for $a_*, b_*, c_* \in \R$ and since $\Bx^*$ is orthogonal we have
\[
\| \bm{\alpha} \|^2 = a_*^2 \| \bm{\beta}_1^*\|^2 + b_*^2 \| \bm{\beta}_2^*\|^2+ c_*^2 \| \bm{\beta}_3^*\|^2.
\]
Since $\| \bm{\alpha} \|^2 \leq C^2$ we have 
\[
|a_*| \leq \frac{C }{ \| \bm{\beta}_1^*\|},  \ \ \ |b_*| \leq \frac{C }{ \| \bm{\beta}_2^*\|}, \ \ \ |c_*| \leq \frac{C }{ \| \bm{\beta}_3^*\|}. 
\]

The Gram-Schmidt algorithm determines $\Bx^*$ from $\Bx$ as follows, where   $\mu_{i,j} = (\bm{\beta}_i\cdot \bm{\beta}_j^*)/\| \bm{\beta}_j^*\|^2$,
\[
\bm{\beta}_i^* = \bm{\beta}_i-\sum_{j=1}^{i-1} \mu_{i,j} \bm{\beta}_j^*
\]
such that 
\[
\bm{\beta}_1^* = \bm{\beta}_1, \ \ 
\bm{\beta}_2^* = \bm{\beta}_2-\mu_{2,1}\bm{\beta}_1,  \ \ 
\bm{\beta}_3^* = \bm{\beta}_3-\mu_{3,1}\bm{\beta}_1-\mu_{3,2}\mu_{2,1}\bm{\beta}_1.
\]
Therefore, we have 
\[
\bm{\alpha} = (a_*-b_*\mu_{2,1} -c_*\mu_{3,2}+c_*\mu_{3,2}\mu_{2,1})\bm{\beta}_1 
+(b_*-c_*\mu_{3,2})\bm{\beta}_2
+(c_*)\bm{\beta}_3
\]
with
\[
a= a_*-b_*\mu_{2,1} -c_*\mu_{3,2}+c_*\mu_{3,2}\mu_{2,1}, 
b=b_*-c_*\mu_{3,2}, 
c=c_*.
\]
This immediately implies the bound for $|c|$.

Since $\Bx$ is an LLL basis, we have that $|\mu_{i,j}| < \tfrac12$.  Therefore, 
\[
|b| = |b_*-c_*\mu_{3,2}| \leq |b_*|+|c_*||\mu_{3,2}| \leq |b_*|+\tfrac12|c_*|
\]
and 
\begin{align*}
|a| & = |a_*-b_*\mu_{2,1} -c_*\mu_{3,2}+c_*\mu_{3,2}\mu_{2,1}| \\
& \leq |a_*|+|b_*||\mu_{2,1}| +|c_*| |\mu_{3,2}| +|c_*||\mu_{3,2}\mu_{2,1}| \\
& \leq |a_*|+\tfrac12 |b_*|+\tfrac12|c_*| +\tfrac14 |c_*|.
\end{align*}
The result follows.

\end{proof}

By Lemma~\ref{lemma:mmlength} we have the following corollary.
\begin{cor}\label{cor:searchbounds} 

Let $\bm{\alpha}$ be an element of the lattice $\varphi(\Ox_K)$ and $a,b,c\in \Z$ so that $\bm{\alpha} = a \bm{\beta}_1+b \bm{\beta}_2+c \bm{\beta}_3$. If $\bm{\beta}_1=\bm{1}$ and $M(\alpha) \leq C$ then 
\[
|a|  \leq   f^{\frac12}C \Big(   \frac{1}{\| \bm{\beta}_1^*\| }\ +  \frac{ \tfrac12 }{\| \bm{\beta}_2^*\| }  +   \frac{ \tfrac34  }{\| \bm{\beta}_3^*\|}\Big),
|b|  \leq f^{\frac12} C \Big(  \frac{1 }{\| \bm{\beta}_2^*\| } +   \frac{ \tfrac12 }{\| \bm{\beta}_3^*\|}\Big), 
|c|  \leq f^{\frac12} C\Big(  \frac{ 1}{\| \bm{\beta}_3^*\| }\Big), 
\]
where $f=3$ if $K$ is totally real and $f=2$ otherwise. 
\end{cor}

\subsection{Overview}

\begin{algorithm}
\caption{Determine $M(\Ox_K)$ for all cubic number fields with $|D_K|\leq N$}
\begin{algorithmic}
\STATE Run Belabas' algorithm on  $N$
\STATE \quad This returns polynomials in the form $p(x)=ax^3 + bx^2 + cx + d$.
\STATE Determine roots $x_1, x_2, x_3$ of $p(x)$
\STATE Use the  integral basis $\{1, ax_1, ax_1^2 + bx_1\}$ for $\Ox_K$.   
\STATE Compute Minkowski embeddings of $1, ax_1, ax_1^2+ bx_1$ to create integral basis $B$ of the lattice.
\STATE Multiply the elements of $B$ by $10^{10}$ and take their floor, creating $B'$.
\STATE Run the LLL algorithm on $B'$ to get an LLL  basis $\mathcal{B}'$ for the lattice.
\STATE Multiply the elements of  $\mathcal{B}'$ by $10^{-10}$ to create $\mathcal{B}$.
\STATE Determine Mahler measure of elements of $\mathcal{B}$.
\STATE Implement Gram-Schmidt on $\mathcal{B}$ to produce orthogonal basis $\mathcal{B}^*$.  
\STATE Determine bounds for coefficients (of elements in $\Ox_K$ written in terms of LLL basis) from Corollary~\ref{cor:searchbounds}, using $\mathcal{B}^*$, and $M(\mathcal{B})$. 
\STATE Compute Mahler measure of the (finite) list of candidate elements. 
\STATE Return the smallest Mahler measure.
\end{algorithmic}
\end{algorithm}

We use  Belabas' algorithm, which  outputs a list of all cubic number  fields with (absolute value of the) discriminant between 0 and $N$ (positive and negative)  along with a polynomial $p(x)=ax^3 + bx^2 + cx + d$ that both generates the field and has the same discriminant as the field.  An integral basis $\{1, ax_1, ax_1^2 + bx_1\}$ can be read off of this data, where $x_1$ is a real root of $p(x)$.  

If $N$ is positive then the  field is  totally real and has 3 real places. If $N$ is negative then the  field is not totally real,  and has one real place and one complex place. 
SageMath \cite{sage} computes the roots of each polynomial, $x_1, x_2, x_3$ with $x_1\in \mathbb{R}$. Using the integral basis, a basis for a Minkowski embedding of $\Ox_K$ has the form 
\[
\langle 1, 1, 1 \rangle, \langle a x_1, ax_2, a x_3 \rangle, \langle a x_1^2+b x_1, a x_2^2+b x_2, a x_3^2+b x_3 \rangle 
\]
when $K$ is totally real and 
\[
\langle 1, 1, 0 \rangle, \langle a x_1,  \Re (a x_2), \Im (a x_2) \rangle, 
\langle a x_1^2+b x_1, \Re (a x_2^2+b x_2), \Im (a x_2^2+b x_2) \rangle 
\]
when $K$ is not totally real. In the latter case, we take $x_2$ to be the complex root which has  a negative  imaginary part.

We run the LLL algorithm as implemented in SageMath \cite{sage}.  This algorithm accepts only integer vectors.  As a result, we multiply our entries by $10^M$ and then truncate them.  After LLL is run, we reverse this process, by multiplying entries by $10^{-M}$.  We also compute the  Gram-Schmidt matrix normalization of this LLL basis.   In practice we use $M=10$, which is large enough not to introduce significant rounding errors \cite{MR4157999}.

We employ a check to verify that the first vector is  $\bm{1}$, which is the case for all fields we calculated.  This simplifies our implementation as $\|\bm{1}\|^2=2$ or $3$. 
The algorithm computes the Mahler measure of the other two basis elements, and records the smaller value.  We use   Corollary~\ref{cor:searchbounds} with this value to  determine  bounds  for coefficients.  Specifically, elements with Mahler measure smaller than the above value must have integer coefficients (in terms of the LLL basis of their Minkowski embedding) satisfying these bounds.   
The algorithm  makes a list of Minkowski embeddings with length bounded by Corollary~\ref{cor:searchbounds}, 
and computes the Mahler measure for each element, returning the smallest Mahler measure and corresponding elements.

\bibliographystyle{amsplain}
\bibliography{mybib.bib}

\noindent\rule{4cm}{.5pt}
\vspace{.25cm}

\noindent {\small Lydia Eldredge}\\
{\small Department of Mathematics, Rose-Hulman Institute of Technology, Terre Haute IN,47803} \\
{\small email: {\tt eldredge@rose-hulman.edu}}

\vspace{.25cm}

\noindent {\small Kathleen Petersen}\\
{\small Department of Mathematics and Statistics, University of Minnesota Duluth, Duluth MN 55812} \\
{\small email: {\tt kpete@umn.edu}}

\end{document}